\numberwithin{equation}{section}
\newtheorem{thm}{Theorem}[section]
\newtheorem{rem}[thm]{Remark}
\newtheorem{ques}[thm]{Question}
\newtheorem{Con*}[thm]{Conjectuer}
\newcommand{\Ann}{\mbox{Ann}\,}
\newcommand{\coker}{\mbox{coker}\,}
\newcommand{\Max}{\mbox{Max}\,}
\newcommand{\Att}{\mbox{Att}\,}
\newcommand{\E}{\mbox{E}}
\renewcommand{\H}{\mbox{H}}
\newcommand{\V}{\mbox{V}}
\newcommand{\fb}{\mathfrak{b}}
\newcommand{\fm}{\mathfrak{m}}
\newcommand{\fp}{\mathfrak{p}}
\newcommand\m{\operatorname{\frak m}}
\newcommand\Rad{\operatorname{Rad}}
\def\Hom{\operatorname{\mathsf{Hom}}}
\def\dim{\operatorname{\mathsf{dim}}}
\DeclareMathOperator{\Supp}{Supp}
\DeclareMathOperator{\Ass}{Ass}
\begin{document}

\title{On the dimension of cofinite modules}

\author[M. Rahro Zargar and Ghadeh Ghasemi]{Majid Rahro Zargar and Ghader Ghasemi}

\address{Majid Rahro Zargar, Department of Engineering Sciences, Faculty of Advanced Technologies, University of Mohaghegh Ardabili, Namin, Ardabil, Iran,}
\email{zargar9077@gmail.com}
\email{m.zargar@uma.ac.ir}
\address{Ghader Ghasemi, Faculty of Mathematical Sciences, Department of Mathematics, University of Mohaghegh Ardabili, Ardabil, Iran,}
\email{ghaderghasemi54@gmail.com}
\subjclass[2020]{13D45, 13C15, 13J10, 18E10}
\keywords{Cofinite module, Noetherian complete local ring, Local cohomology module, Krull dimension. }

\begin{abstract}
Let $I$ be an ideal of a commutative Noetherian complete local ring $R$. In the present paper, we establish the equality $\dim R/(I+\Ann_R M)=\dim M$ for all $I$-cofinite $R$-modules $M$.
\end{abstract}
\maketitle


\section{Introduction}

Throughout this paper, let $R$ denote a commutative Noetherian ring
(with identity) and $I$ be an ideal of $R$. Also, we denote by $\mathscr{C}(R)$ the category of all $R$-modules. For an $R$-module $M$, the
$i$th local cohomology module of $M$ with respect to $I$ is
defined as:
$$H^i_I(M) = \underset{n\geq1} {\varinjlim}\,\,
\text{Ext}^i_R(R/I^n, M).$$ We refer the reader to \cite{Gr1} or
\cite{BS} for more details about local cohomology. Hartshorne, in \cite{Ha}, defined an $R$-module $M$ to be
$I$-{\it cofinite}, if $\Supp M\subseteq
V(I)$ and ${\rm Ext}^{i}_{R}(R/I, M)$ is a finitely generated $R$-module
for all $i\geq0$.

In the sequel, for any ideal $I$ of $R$, we denote by $\mathscr{C}(R, I)_{cof}$ the category of all $I$-cofinite
$R$-modules. Also, we denote by $\mathscr{A}(R)$ the class of all ideals $I$ of $R$ such that $\mathscr{C}(R, I)_{cof}$ is an Abelian subcategory of $\mathscr{C}(R)$; that is, if $f: M\longrightarrow N$ is an $R$-homomorphism of
$I$-cofinite modules, then the $R$-modules ${\rm ker}\, f$ and ${\rm coker}\, f$ are $I$-cofinite, too.

One of the more elementary results concerning the finitely generated $R$-module $M$ is the relation $\dim R/\Ann_R M=\dim M$. This relation easily follows from the fact that $\Supp M=V(\Ann_R M)$. But, this result does not hold for cofinite $R$-modules with respect to an ideal of $R$. For example, if $(R,\m,k)$ is a Noetherian local ring of dimension $d>0$, then the $\m$-cofinite $R$-module $\E_R(k)$ (the injective envelope of the residue field $k=R/\m$ of $R$) is faithful, that is $\Ann_R \E_R(k)=0$, and $\dim \E_R(k)=0$. So, $\dim R/\Ann_R \E_R(k)=d>\dim \E_R(k)$.\\
But, we know that every finitely generated $R$-module $M$ is $(0_R)$-cofinite, where $(0_R)$ denotes the zero ideal of $R$. So, one has the equality $\dim R/((0_R)+\Ann_R M)=\dim M$. Furthermore, in \cite{Me1}, Melkersson proved that if $(R,\m)$ is a Noetherian complete local ring and $I$ is an ideal of $R$, then  each zero-dimensional $I$-cofinite module $M$ is Artinian and for each $\fp\in \Att_R M$ we have $\dim R/(I+\fp)=0$. So, $\dim R/(I+\Ann_R M)=0=\dim M$. Therefore, according to the above, it is natural to raise the following question:
\begin{ques} Let $I$ be an ideal of a Notherian complete local ring $R$ and $M$ an $I$-cofinite $R$-module. Then, does the equality $\dim R/(I+\Ann_R M)=\dim M$ hold?
\end{ques}

More recently, in \cite{GP1}, Pirmohammadi provided a partial answer to the above question. Indeed. he proved that for any ideal $I$ of a Noetherian complete local ring with $I\in\mathscr{A}(R)$ and any non-zero $I$-cofinite $R$-module $M$, the equality $\dim R/(I+\Ann_R M)=\dim M$ holds.

In the present paper, we provide an affirmative answer to the above question without any additional assumption. Indeed, we prove the following result:
\begin{thm}
Let $I$ be and ideal of a complete local ring $R$ and $M$ a non-zero $I$-cofinite $R$-module. Then \emph{$$\dim R/(I+\Ann_R M)=\dim M.$$}
\end{thm}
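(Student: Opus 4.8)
\medskip

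\noindent Write $\fa:=\Ann_R M$ and $d:=\dim R/(I+\fa)$; the goal is $\dim M=d$. The inequality $\dim M\le d$ is immediate: being $I$-cofinite, $M$ is $I$-torsion, so $\Supp_R M\subseteq V(I)$, and trivially $\Supp_R M\subseteq V(\fa)$, whence $\Supp_R M\subseteq V(I)\cap V(\fa)=V(I+\fa)$; in particular $M\ne0$ forces $I+\fa$ to be a proper ideal. So the content is the reverse inequality $\dim M\ge d$.

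\medskip

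\noindent The plan is first to replace $M$ by a finitely generated module of the same dimension. Cofiniteness gives that $(0:_M I)=\Hom_R(R/I,M)$ is finitely generated, and then by induction on $n$, using $0\to(0:_M I^{n})\to(0:_M I^{n+1})\to\Hom_R(I,(0:_M I^{n}))$, each $(0:_M I^{n})$ is finitely generated. One records the elementary fact that $r\in\Ann_R(0:_M I)$ implies $r^{\,n}\in\Ann_R(0:_M I^{n})$ for all $n$ (immediate induction: if $I^{n+1}x=0$ then $Ix\subseteq(0:_M I^{n})$, so $r^{n}Ix=0$, so $r^{n}x\in(0:_M I)$, so $r^{n+1}x=0$). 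Hence every element of $M=\bigcup_n(0:_M I^{n})$ is killed by a power of $r$, which yields two things: $\sqrt{\Ann_R(0:_M I^{n})}$ is independent of $n$, so $\Supp_R M=\bigcup_n V(\Ann_R(0:_M I^{n}))=V(\Ann_R(0:_M I))$; and, since $M$ is $I$-torsion, $\Ass_R M=\Ass_R(0:_M I)$, so — the minimal primes of any support being associated primes — $\dim M=\sup\{\dim R/\fp:\fp\in\Ass_R M\}=\dim(0:_M I)=\dim R/\Ann_R(0:_M I)$. Thus the theorem becomes the equality of finitely generated data $\dim R/\Ann_R(0:_M I)=\dim R/(I+\fa)$, whose ``$\le$'' part is the paragraph above applied to $(0:_M I)$; only ``$\ge$'' remains.

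\medskip

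\noindent The remaining inequality is where completeness of $R$ must be used, and I would bring it in through Matlis duality. Put $(-)^{\vee}=\Hom_R(-,\E_R(k))$ and $L:=M^{\vee}$. Then $\Ann_R L=\fa$, the module $L$ is $I$-adically complete ($L=\varprojlim_n L/I^{n}L$), each quotient $L/I^{n}L\cong(0:_M I^{n})^{\vee}$ is Artinian, and $\Ann_R(0:_M I)=\Ann_R(L/IL)$. Passing to $R/\fa$ — a complete local ring over which $L$ is faithful and which changes none of the above, $\fa$ being an annihilator throughout — reduces the problem to the following statement: \emph{for a nonzero faithful $I$-adically complete module $L$ over a complete local ring $R$ all of whose quotients $L/I^{n}L$ are Artinian, $\Ann_R(L/IL)$ has a minimal prime of dimension $\dim R/I$}. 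Equivalently, among the minimal primes $\fq$ of $I$ with $\dim R/\fq=\dim R/I$ there is at least one with $\fq\in\Supp_R L^{\vee}$ (i.e. $(0:_{L^{\vee}}I)_{\fq}\ne0$). Localizing at such a $\fq$ one has $IR_{\fq}$ primary to the maximal ideal, so $L^{\vee}_{\fq}$ — if nonzero — is an $IR_{\fq}$-cofinite Artinian $R_{\fq}$-module, and after completing one is in the setting of Melkersson's theorem cited in the Introduction.

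\medskip

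\noindent The step I expect to be the main obstacle is exactly this last one. Faithfulness of $L$ (equivalently of $M$) is not inherited by localizations, so one cannot simply assert $L^{\vee}_{\fq}\ne0$ for a given minimal prime $\fq$ of $I$; the argument must instead exploit the \emph{full} cofiniteness of $M$ — concretely, that \emph{all} of the $L/I^{n}L$, not merely $L/IL$, are Artinian, so that $L$ is an inverse limit of Artinian modules with surjective transition maps and its faithfulness is controlled at the level of the whole tower — together with the completeness of $R$, in order to force the existence of such a $\fq$ in $\Supp_R M$. (As a sanity check and an indication of what has to be overcome: $M$ is automatically $(I+\fa)$-cofinite and $\dim R/\bigl((I+\fa)+\fa\bigr)=d$, so Pirmohammadi's theorem \cite{GP1} would conclude the proof the moment $I+\fa\in\mathscr{A}(R)$; the whole difficulty is to dispense with that abelian-category hypothesis, and the delicate point above is the price of doing so.)
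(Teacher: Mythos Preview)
Your reductions are correct and useful: the equality $\Ass_R M=\Ass_R(0:_M I)$ together with the finite generation of each $(0:_M I^{n})$ does give $\dim M=\dim R/\Ann_R(0:_M I)$, and your Matlis–duality translation ($L=M^{\vee}$ is $I$-adically complete with Artinian layers $L/I^{n}L\cong(0:_M I^{n})^{\vee}$, and $\Ann_R(L/IL)=\Ann_R(0:_M I)$) is accurate. But the proposal stops precisely at the crux: you never prove that some minimal prime of $I+\fa$ of maximal dimension lies in $\Supp_R M$. You say the argument ``must exploit the full cofiniteness of $M$ \dots\ in order to force the existence of such a $\fq$'' --- and then do not carry this out. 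Everything up to that point is the easy direction plus bookkeeping; the theorem is exactly the missing step, and nothing in your outline indicates how the inverse-limit/faithfulness data would actually produce the required prime. As written, this is a reformulation, not a proof.

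For contrast, the paper does \emph{not} try to locate such a prime directly. It argues by induction on $\dim M$. The base case $\dim M=0$ is Melkersson's Artinian criterion together with the known fact that attached primes of an Artinian $I$-cofinite module over a complete local ring satisfy $\dim R/(I+\fp)=0$. For the inductive step, after killing $\Gamma_{\fm}(M)$ (which is Artinian $I$-cofinite, so harmless for both sides of the equality), one chooses $z$ that is simultaneously part of a system of parameters for $R/(I+\fa)$ and a nonzerodivisor on $M$; then $H^{1}_{Rz}(M)$ is $(I+Rz)$-cofinite of dimension $\dim M-1$ by a result of Bahmanpour, and --- this is the key computation --- one shows $\Ann_R H^{1}_{Rz}(M)=\Ann_R M$ by checking that $z$ is in fact regular on every $(0:_M I^{n})$ and invoking the identity $\Ann_R H^{1}_{Rz}(N)=\Ann_R N$ for finitely generated $N$ with $z$ regular. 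The induction hypothesis then gives $\dim R/(Rz+I+\fa)=\dim M-1$, and adding back the parameter $z$ yields $\dim R/(I+\fa)=\dim M$. Completeness enters only through the base case (via the structure of Artinian cofinite modules), not through Matlis duality at higher dimension.

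If you want to salvage your approach, the honest obstacle is this: from ``$L$ faithful, $I$-adically complete, with Artinian $L/I^{n}L$'' you need a lower bound on $\dim R/\Ann_R(L/IL)$, and there is no off-the-shelf lemma that delivers it. The paper's inductive descent via $H^{1}_{Rz}$ sidesteps the issue entirely.
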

Here, we should notice that the category of $I$-cofinite $R$-modules with dimension less than one is Abelian (see \cite[Theorem 2.7]{BNS} ). But, the category of $I$-cofinite $R$-modules with a dimension greater than two is not necessarily Abelian. For example, consider the following counterexample of Hartshorne: Let $R=k[[x,y,z,w]]$, $I=(x,z)$ and $M=R/(xy-zw)$. Now, set $t:=xy-zw$ and by the exact sequence $0\longrightarrow R \stackrel{t}\longrightarrow R\longrightarrow M \longrightarrow0$, one can get the following induced exact sequence: $$\cdots\longrightarrow \H_{I}^2(R) \stackrel{t^*}\longrightarrow \H_{I}^2(R)\longrightarrow \H_{I}^2(M) \longrightarrow0.$$

Since $\H_{I}^i(R)=0$ for all $i\neq 2$, one can use \cite[Proposition 2.1]{MZ} to see that the $R$-module $\H_{I}^2(R)$ is $I$-cofinite. However, $\coker(t^*)=\H_{I}^2(M)$ is not $I$-cofinite (see \cite{Ha}). Furthermore, using the fact that $\Supp_R(\H_{I}^2(R))=\Supp_R(\underset{i\in\Bbb{N}_{0}}\bigoplus\H_{I}^i(R))=\Supp_R(R/I)$, implies that $\dim_R(\H_{I}^2(R))=2$.

\section{results}

The starting point of this section is the following main result.
\begin{thm}
\label{2.5}
Let $I$ be and ideal of a complete local ring $(R,\m)$ and $M$ a non-zero $I$-cofinite $R$-module. Then \emph{$$\dim R/(I+\Ann_R M)=\dim M.$$}
\end{thm}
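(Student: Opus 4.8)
The inequality $\dim R/(I+\Ann_R M)\ge\dim M$ is the easy direction: since $\Supp M\subseteq V(I)\cap V(\Ann_R M)=V(I+\Ann_R M)$, we get $\dim M\le\dim R/(I+\Ann_R M)$. So the work is all in the reverse inequality $\dim R/(I+\Ann_R M)\le\dim M$. I would argue by induction on $d:=\dim R/(I+\Ann_R M)$; the case $d=0$ is covered by Melkersson's result quoted in the introduction (a zero-dimensional $I$-cofinite module is Artinian, hence has dimension $0$, and conversely if $\dim R/(I+\Ann_R M)=0$ there is nothing to prove since $M\neq0$ forces $\dim M\ge0$). Actually the real content starts at $d\ge1$.

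The key reduction is to pass to an associated prime. Pick $\fp\in\Assh(R/(I+\Ann_R M))$, so $\dim R/\fp=d$ and $\fp\supseteq I+\Ann_R M$; in particular $I\subseteq\fp$, so $\fp\in V(I)=\Supp R/I$. The plan is to produce an element $M$ "sees" along $\fp$. Since $\Ann_R M\subseteq\fp$ and (one should check) $\fp\in\Supp M$ — this needs an argument, because $\Supp M$ need not equal $V(\Ann_R M)$ for cofinite modules, but for $I$-cofinite $M$ the set $\Supp M$ is closed and, by Melkersson/Hartshorne-type results, $\Ass_R M$ is finite and $V(\Ann_R M)=\bigcup_{\fq\in\Ass M}V(\fq)$; the point is that some associated prime of $M$ is contained in $\fp$ — I would then localize, or better, choose $x\in\fp$ a parameter and use a short exact sequence $0\to (0:_M x)\to M\xrightarrow{x}xM\to 0$ together with closedness of $\mathscr{C}(R,I)_{cof}$ under taking the kernel/cokernel of multiplication by a ring element (multiplication by $x\in R$ is always an $R$-homomorphism of $I$-cofinite modules, and for maps between a module and itself the kernel and cokernel are again $I$-cofinite — this is essentially Melkersson's lemma and does not need $I\in\mathscr{A}(R)$). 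This is the crucial point that lets us avoid the hypothesis $I\in\mathscr{A}(R)$ used by Pirmohammadi.

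Concretely: choose $x\in\fp\setminus\bigcup\{\fq:\fq\in\Ass_R M,\ \dim R/\fq=\dim M\}\cup\{\text{minimal primes of }R/(I+\Ann_R M)\text{ not inside }\fp\}$ — a prime avoidance. Set $N:=M/xM$ and $K:=(0:_M x)$; both are $I$-cofinite by the above. One arranges that $\dim R/(I+\Ann_R N)=d-1$: indeed $I+\Ann_R N\supseteq I+\Ann_R M+xR$, and $\fp$ has height $\ge1$ over $I+\Ann_R M$ so killing $x$ drops the dimension by exactly one at the chosen prime, while $x$ avoids the other top-dimensional components. By induction $\dim N=d-1$. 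Now relate $\dim N$ and $\dim M$: from $0\to K\to M\xrightarrow{x}M\to N\to 0$ (broken into two short exact sequences through $xM$), $\dim N\le\dim M$ — actually $\dim M/xM\ge\dim M-1$ always, and here we need the matching upper bound $\dim M\le\dim N+1$, which follows because $x$ was chosen outside the top-dimensional associated primes of $M$ (so $x$ is a parameter on $M$, forcing $\dim M/xM=\dim M-1$). Combining, $\dim M=\dim N+1=(d-1)+1=d$, as desired.

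The main obstacle, and the step I would be most careful with, is the very first localization/associated-prime claim: showing that the chosen $\fp\in\Assh(R/(I+\Ann_R M))$ actually lies in $\Supp M$ (equivalently, that $\Ann_R M\subseteq\fp$ is enough to conclude $M_\fp\neq0$). For a general module $\Ann_R M\subseteq\fp$ does not imply $\fp\in\Supp M$, so one must use cofiniteness: here I would invoke that $\Supp M$ is Zariski-closed for $I$-cofinite $M$ (since $\Supp M=\Supp R/I\cap\Supp M=V(I)\cap\V(\Ann_R(\Ext^0_R(R/I,M)))$ or via the finiteness of $\Ass M$), so $\Supp M=V(\Ann_R M)\cap V(I)$ after all — and then $\fp\in V(\Ann_R M)\cap V(I)=\Supp M$ automatically. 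Verifying this identity $\Supp M=V(\Ann_R M)$ for $I$-cofinite modules (which fails for $\E_R(k)$ only because that example is not finitely generated over $R$ but the identity can still be checked directly: $\Supp\E_R(k)=\{\m\}=V(\m)=V(\Ann)$ when $d=0$ — wait, the intro says it fails for $d>0$, so this identity is genuinely false) means I cannot use it; instead the correct tool is: for $I$-cofinite $M$, $\Supp M$ is closed and $\Min\Supp M=\Min V(\Ann_R M)$, which suffices because $\Assh(R/(I+\Ann_R M))$-primes are minimal over $I+\Ann_R M\supseteq\Ann_R M$. Pinning down this last assertion cleanly is where the proof's weight lies.
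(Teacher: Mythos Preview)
Your easy direction and base case are fine, but the inductive step has two genuine gaps that together are exactly where the real difficulty lies.

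\textbf{Gap 1: cofiniteness of $M/xM$.} You assert that the kernel and cokernel of multiplication by $x$ on an $I$-cofinite $M$ are again $I$-cofinite, calling this ``essentially Melkersson's lemma.'' Melkersson's results give this when $x\in I$, not for arbitrary $x\in R$. For general $x$ one does get it from the long exact sequence of $\Ext_R(R/I,-)$ applied to $0\to M\xrightarrow{x}M\to M/xM\to 0$, but only when $x$ is $M$-regular. You only arrange for $x$ to avoid the \emph{top-dimensional} associated primes of $M$; that is not enough. To make $x$ $M$-regular you must first kill $\Gamma_{\fm}(M)$ (so that $\fm\notin\Ass_R M$) and then avoid the finitely many associated primes. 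The paper carries out exactly this reduction, showing $\Rad(I+\Ann_R M)=\Rad(I+\Ann_R\overline M)$ for $\overline M=M/\Gamma_{\fm}(M)$.

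\textbf{Gap 2: control of $\Ann_R(M/xM)$.} Your induction needs $\dim R/(I+\Ann_R N)=d-1$, but from $\Ann_R N\supseteq \Ann_R M+xR$ you only get $\le d-1$. For non--finitely generated $M$ there is no determinant trick, so nothing prevents $\Ann_R(M/xM)$ from being much larger than $\Ann_R M+xR$; if it is, the induction gives you an equality at the wrong level and you never conclude $\dim M\ge d$. (Relatedly, your choice $x\in\fp$ with $\fp$ a minimal prime of $I+\Ann_R M$ is inconsistent with $x$ being a parameter: $\fp$ then remains a minimal prime of $I+\Ann_R M+xR$, so adding $x$ does not drop the dimension.)

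This annihilator control is precisely the heart of the paper's argument, and it is handled by a different construction. Instead of $M/xM$, the paper takes a parameter $z$ for $R/(I+\Ann_R M)$ that is $M$-regular and passes to the local cohomology module $H^1_{Rz}(M)$, which by a cited lemma of Bahmanpour is nonzero, $(I+Rz)$-cofinite (note the ideal changes), and has $\dim H^1_{Rz}(M)=\dim M-1$; the induction is on $\dim M$, not on $d$. The crucial step is then the equality $\Ann_R H^1_{Rz}(M)=\Ann_R M$, proved by showing $z$ is regular on each finitely generated submodule $(0:_M I^n)$, whence $\Ann_R H^1_{Rz}((0:_M I^n))=\Ann_R(0:_M I^n)$, and intersecting over $n$. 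Your proposal does not contain an analogue of this step; your final paragraph correctly identifies the closely related obstruction ($\Supp M\ne V(\Ann_R M)$ for cofinite modules) but does not resolve it.
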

\begin{proof} We use induction on $\dim_R M$ to prove the result. First set $\fb:=\Ann_R M$, and then assume that $\dim M=0$. Then, using \cite[Lemma 2.1]{Me2} implies that $M$ is Artinian; and thus $\dim R/(I+\fb)= 0=\dim M$, by \cite[Lemma 2.1]{B3}. Now, let $\dim M \geq 1$ and suppose that the result has been proved for all cofinite $R$-modules of dimension $\dim_R M-1.$ Note that since $\Supp M\subseteq V(I+\fb)$, then $\dim R/(I+\fb)\geq \dim M.$ Therefore, we may and do assume that $n:=\dim R/(I+\fb)\geq 0.$ Hence, there are elements $x_1,\dots, x_n$ in $R-(I+\fb)$ such that $$\Rad((x_1 +(I+\fb),\dots, x_n +(I+\fb)))=\fm/(I+\fb),$$ and so $\Rad((x_1,\dots,x_n) +(I+\fb))=\fm$.
By \cite[Lemma 2.4]{B3}, $\Gamma_{\m}(M)$ is an Artinian $I$-cofinite $R$-module. Therefore, by a similar argument in the first step of induction, we have $\Rad(I+\Ann_R \Gamma_{\m}(M))\supseteq\m$. Now, set $\overline{M}:=M/\Gamma_{\m}(M)$ and consider the following short exact sequence:
$$0\longrightarrow \Gamma_{\m}(M) \longrightarrow M \longrightarrow \overline{M}\longrightarrow 0,$$
which shows that the $R$-module $\overline{M}$ is $I$-cofinite and $\dim \overline{M}=\dim_R M$. Furthermore, since $\Rad(\fb)=\Rad(\Ann_R \Gamma_{\fm}(M)\cap\Ann_R\overline{M})$ and $\Rad(I+\Ann_R \Gamma_{\m}(M))\supseteq\m$, then by considering the following equalities:

\[\begin{array}{rl}
\Rad(I+\fb)&=\Rad(I+\Rad(\fb))\\
&=\Rad(I+\Rad(\Ann_R\Gamma_{\fm}(M)\cap\Ann_R\overline{M}))\\
&=\Rad(I+\Ann_R\Gamma_{\fm}(M)\cap\Ann_R\overline{M})\\
&=\Rad(I+\Ann_R\Gamma_{\fm}(M))\cap\Rad(I+\Ann_R\overline{M})\\
&=\fm\cap\Rad(I+\Ann_R\overline{M})\\
&=\Rad(I+\Ann_R\overline{M}),
\end{array}\]
one has $\dim R/(I+\fb)=\dim R/(I+\Ann_R \overline{M})$. Therefore, by replacing $M$ with $\overline{M}$, we may and do assume that $\Gamma_{\fm}(M)=0$, and so $\fm\not\in\Ass_R M$. Therefore, one can deduce that $$(x_1,\dots,x_n)\nsubseteq \bigcup_{\fp\in\Ass_R M}\fp.$$
So, there exists an element $b\in (x_2,\dots,x_n)$ such that $z:=x_1+b\notin\bigcup_{\fp\in\Ass_R M}\fp,$ and also $(x_1,x_2,\dots,x_n)=(z,x_2,\dots,x_n)$. Hence, by the following equalities:
\[\begin{array}{rl}
\fm/(I+\fb)&=\Rad(\frac{(x_1,\dots,x_n) +(I+\fb)}{(I+\fb)})\\
&=\Rad(\frac{(z, x_2,\dots,x_n)+(I+\fb)}{I+\fb})\\
&=\Rad((z+(I+\fb), x_2 +(I+\fb),\dots, x_n +(I+\fb))),
\end{array}\]
one can deduce that the element $z+(I+\fb)$ is a subset of a system of parameters for $R/(I+\fb)$.
Here, we should notice that $z\not\in(I+\fb)$. It therefore follows from \cite[Proposition 15.22]{SHA} that
\begin{alignat}{2}
\dim R/(I+\fb)=\dim R/(Rz+I+\fb)+1.\tag{2.1}
\end{alignat}
Next, in view of \cite[Lemma 4.2]{B2} we have that the $R$-module $H^1_{Rz}(M)$ is a non-zero $(I+Rz)$-cofinite module and $\dim_R \H^1_{Rz}(M)=\dim_R M-1$. Therefore, by the inductive hypothesis one has the following equality:
\begin{equation}
\dim R/(I+Rz+\Ann_R \H^1_{Rz}(M))=\dim_R \H^1_{Rz}(M)=\dim_R M-1.\tag{2.2}
\end{equation}
Now, we claim that $z\not\in\bigcup_{\fp\in\Ass_R M/(0:_MI^n)}\fp$ for each $n\in\Bbb{N}$. To do this, contrary assume that $z\in\bigcup_{\fp\in\Ass_R M/(0:_MI^n)}\fp$ for some $t\in\Bbb{N}$. Hence, there is $\fp\in\Ass_R M/(0:_MI^t)$ such that $z\in\fp$ and also there is an element $\beta \in M$ with $\beta\not\in (0:_MI^t)$ such that $\fp=(0:_R \beta+(0:_MI^t))$. Therefore $z\beta I^t=0$; and thus $I^t\beta\subseteq (0:_M Rz)=0$. So, $\beta\in (0:_MI^t)$ which is a contradiction. Now, for each $n\in\Bbb{N}$, by the exact sequence $$0\longrightarrow(0:_MI^n)\longrightarrow M\longrightarrow M/(0:_MI^n)\longrightarrow0,$$ one get the exact sequence $0\longrightarrow H^1_{Rz}((0:_MI^n))\longrightarrow H^1_{Rz}(M),$ which shows that $\Ann_R H^1_{Rz}(M)\subseteq \Ann_R H^1_{Rz}((0:_MI^n)).$ Since $z$ is a regular element on the finitely generated $R$-module $(0:_MI^n)$, then by \cite[Theorem 3.4]{B1} one has the equality $\Ann_R H^1_{Rz}((0:_MI^n))=\Ann_R(0:_MI^n).$
Hence, by considering the following
\[\begin{array}{rl}
\Ann_R M &\subseteq \Ann_R H^1_{Rz}(M)\\
&\subseteq\bigcap_{n=1}^{\infty} \Ann_R(0:_MI^n)\\
&=\Ann_R M,
\end{array}\]
we can get the equality $\Ann_R H^1_{Rz}(M)\overset{\dag}=\Ann_R M$. Therefore, given the (2.1), (2.2) and $^\dag$ we have the following equalities:
\[\begin{array}{rl}
\dim R/(I+\fb)&=\dim R/(Rz+I+\fb)+1\\
&=\dim R/(Rz+I+\Ann_R H^1_{Rz}(M))+1\\
&=\dim_R M -1+1\\
&=\dim_R M,
\end{array}\]
as required.
\end{proof}
\begin{rem}\emph{There exists a two-dimensional Noetherian local domain $(R,\fm)$
which does not have a maximal Cohen-Macaulay $R$-module (see \cite[§1]{HOK} and \cite{FR}).
Therefore, by \cite[Corollay 3.6]{B4}, over such a ring the $R$-module $\H_{\fm}^1(R)$ can not be finitely generated.
Now, let $x$ be a nonzero element of $R$ and consider the following exact sequence:
$$0\longrightarrow \Gamma_{\fm}(R/xR)\longrightarrow \H_{\fm}^1(R)\stackrel{x}\longrightarrow \H_{\fm}^1(R),$$
which is induced from the exact sequence $0\longrightarrow R\stackrel{x}\longrightarrow R\longrightarrow R/xR\longrightarrow 0.$
Therefore, one has the following isomorphism:
\begin{alignat}{2}
\Gamma_{\fm}(R/xR)\cong\Hom_R(R/Rx, \H^1_{\fm}(R)).\tag{2.3}
\end{alignat}
If $x\in\Ann_R(\H_{\fm}^1(R))$, then by the above isomorphism $\Gamma_{\fm}(R/xR)\cong \H_{\fm}^1(R)$
which is a contradiction by the fact that $\H_{\fm}^1(R)$ is not finitely generated, and so we have $\Ann_R(\H_{\fm}^1(R))=0$.\\
Furthermore,  by the isomorphism (2.3), $\Hom_R(R/Rx, \H^1_{\fm}(R))$ is finitely generated and also $\Supp_R\H^1_{\fm}(R)\subseteq \Max R\cap\V(Rx).$ Hence, by \cite[Lemma 2.1]{Me1} one has $\H^1_{\fm}(R)$ is $Rx$-cofinite. Therefore, there is the following equality: $$1=\dim_R(R/Rx)=\dim_R R/(Rx+\Ann_R(\H_{\fm}^1(R)).$$ But, $\dim_R\H_{\fm}^1(R)=0$, and so $\dim_R R/(Rx+\Ann_R(\H_{\fm}^1(R))\neq \dim_R\H_{\fm}^1(R)$. It therefore follows from Theorem \ref{2.5} that the desired ring $R$ cannot be complete. So, in our main result, the completeness assumption on $R$ is quite necessary.
}

\end{rem}

$\mathbf{Acknowledgments}$. The authors would like to thank Prof. Kamal Bahmanpour for his valuable and profound comments during the preparation of the manuscript.

\end{document}